\documentclass[12pt]{article}
\usepackage[citecolor=blue]{hyperref}
\usepackage{amsmath,amsfonts,amssymb,amsthm}
\usepackage{mathrsfs}
\usepackage{verbatim}           
\usepackage{enumerate}
\usepackage[pdftex]{graphicx}
\usepackage[pdftex]{color}         
\usepackage{amscd}
\usepackage[footnotesize]{caption}

\usepackage{epstopdf}

\setlength{\oddsidemargin}{0pt} \setlength{\evensidemargin}{0pt}
\setlength{\textwidth}{6in}
\setlength{\textheight}{8.5in}
\setlength{\voffset}{-0.8in}

\setlength{\parskip}{3px}


\newtheorem{theorem}{Theorem}[section]
\newtheorem{lemma}[theorem]{Lemma}
\newtheorem{corollary}[theorem]{Corollary}
\newtheorem{proposition}[theorem]{Proposition}
\newtheorem{definition}[theorem]{Definition}

\theoremstyle{remark}
\newtheorem{remark}[theorem]{Remark}

\numberwithin{equation}{section}


\def\R{{\mathbb R}}

\def\Rplus{{\mathbb{R}_+}}


\def\|{{|\!|}}

\title{Transmission Eigenvalues and  Thermoacoustic Tomography}
\author{David Finch$^{\thanks{
         Department of Mathematics, Oregon State University, Corvallis
         OR, 97331, finch@math.oregonstate.edu, Supported in part by NSF Grant
         DMS-100914}}$ \\
    Kyle S. Hickmann$^{\thanks{Department of Mathematics, Tulane
        University, New Orleans, LA 70118,
         khickma@tulane.edu}}$ \\
}


\begin{document}
\maketitle

\begin{abstract}
  The spectrum of the interior transmission problem is related to the
  unique determination of the acoustic properties of a body in
  thermoacoustic imaging. Under a non-trapping hypothesis, we show
  that sparsity of the interior transmission spectrum implies a range
  separation condition for the thermoacoustic operator.  In odd
  dimensions greater than or equal to three, we prove that the
  interior transmission spectrum for a pair of radially symmetric
  non-trapping sound speeds is countable, and conclude that the ranges
  of the associated theromacoustic maps have only trivial
  intersection.
\end{abstract}

\footnotetext{{\bf 2010 MSC}: 35P25, 35R30, 74J25}

\medskip

\noindent {\bf Keywords}: Interior Transmission Problem, Transmission Eigenvalues, Thermoacoustic Tomography, Hybrid
Imaging Methods,  Acoustics, Radial Symmetry

\medskip


\section{Introduction}

The aims of this paper are to point out a connection between the
interior transmission eigenvalue spectrum and a type of uniqueness
question for sound speed in a  wave equation and to use this
connection to make some conclusions about the wave equation.  The wave
equation problem we consider arises in thermoacoustic tomography (TAT).
In the standard model of TAT, a pressure wave  is generated in a
body  $D \subset \mathbb{R}^d$ whose sound speed is a pertubation of a
constant background sound speed which we will take throughout to be
unity. We take $u(x,t)$ to be the solution of  the Cauchy problem
\begin{align}\label{forward_TAT_problem}
\partial^2_t u(x,t) - c^2(x) \Delta u(x,t) &= 0 \textrm{ on }
\mathbb{R}^d \times \Rplus \\ \nonumber u(x,0) = f(x), \, \partial_t
u(x,0) &= 0 \textrm{ for } x \in \mathbb{R}^d
\end{align} 
for some initial pressure disturbance $f(x)$ supported in $D$.  Data
is measured on the boundary of the body
\begin{equation}
g(x,t) = u(x,t)|_{\partial D \times \Rplus}.
\end{equation}
The inverse TAT problem is to reconstruct $f(x)$ from data $g(x,t).$
To ensure that $f$ might be determined by $g$ 
it  is essential that the solution $u(x,t)$ extend to a solution on 
all of $\R^d$.

It has become apparent that, to at least some extent, the measurements of an
ultrasound field acquired in thermoacoustic tomography determine the acoustic
properties of the body being imaged \cite{Wang:2006,ZA:2006,HKN:2008}. In
\cite{HKN:2008} it was proved for example that the thermoacoustic data of a
non-trivial source determines the speed from among its constant multiples. Numerical
work trying to recover both $f$ and $c$ can be found in
\cite{JYG:2006,Wang:2006,YaoJiang:2009,YuanJiang:2006,YuanJiang:2009,YZJ:2006,ZA:2006}.
A more substantial theoretical result is due to Stefanov and Uhlmann
\cite{SU:12a}, who proved that if the TAT data for two waves speeds and a very
special common initial value agree, and if the speeds and domain satisfy some
additional geometric hypotheses, then they must be equal.  In this paper we are
motivated to study a generalized interior transmission eigenvalue problem (ITP)
by looking for conditions on two speeds that ensure the TAT measurements they
generate are distinct.

To make the connection between the wave equation and the interior
transmission problem, we take the temporal Fourier transform of the
solution of the wave equation to get a family of Helmholtz
equations. Since we need to appeal to analyticity of this Fourier
transform, we must assume rapid decay in time of solutions of the wave
equation. This will restrict the classes of sound speed we can treat,
and will confine the discussion to odd dimensions.

Our uniqueness result asserts that, under suitable hypotheses on sound
speeds, the ranges of two thermoacoustic maps have trivial
intersection provided that the associated transmission eigenvalue
problem has not too many solutions. In particular, discrete spectrum
is sufficient.  However, to date we know of no results in the literature
which imply discrete transmission eigenvalue spectrum for smooth index
of refraction without assuming constant sign for the contrast. For the
special case of non-trapping radial sound speeds, we prove that in odd
dimension at least three, the transmission eigenvalue spectrum for a
distinct pair of sound speeds is discrete.

\section{Background and Notation}\label{sect:range}

Let $u$ be the solution of (\ref{forward_TAT_problem}) above. 
\begin{definition}\label{TAT_map}
The \emph{thermoacoustic map on $D$ with sound speed  $c(x)$} is defined by
\begin{equation}
\mathcal{L}_{c(x)} f(x) = u(x,t)|_{\partial D \times \Rplus}
\end{equation} 
for all initial pressure disturbances $f(x) \in C^{\infty}_0(D)$ and $u(x,t)$ a solution of
(\ref{forward_TAT_problem}). 
\end{definition}

For a given domain $D$ each sound speed defines a new thermoacoustic
map. In this paper we only consider a special class of sound speeds.

\begin{definition}\label{acoustic_profile}
  An \emph{acoustic profile on a domain $D$} is a smooth function
  $c(x) \in C^{\infty}(\mathbb{R}^d)$ such that
\begin{itemize}
\item[\emph{i})] $0 < \sigma < c(x) < \infty$ for $x \in D$  for some
  $\sigma > 0$ 
\item[\emph{ii})] $\textrm{supp}(1-c(x)) \subset D$.
\end{itemize}
\end{definition}

Properties of the range of $\mathcal{L}_{c(x)}$ are related to the
unique determination of the acoustic speed $c(x)$ by which we mean the
following. Given a non-zero function on $\partial D$ which lies in the
range of some thermoacoustic map, is there only one acoustic profile
$c$ for which it belongs to the range of $\mathcal{L}_c$?  We do not
address the question of recovering $c$ in the case when it is unique,
nor whether there is any stability estimate. In an analogous question
for the linearized forward map, Stefanov and Uhlmann \cite{SU:12b} have
recently proved an instability result. We refine the question of
unique determination by restricting the class of acoustic profiles. 

\begin{definition}\label{range_to_uniqueness}
  Let $\mathscr{D}$ be a set of acoustic profiles on some
  domain $D$. We say that a profile $c(x) \in \mathscr{D}$ is 
  uniquely determined in $\mathscr{D}$ by thermoacoustic data if and only if
  \begin{equation*}
    {\mathcal Rg}(\mathcal{L}_{c(x)}) \cap {\mathcal
      Rg}(\mathcal{L}_{b(x)}) = \{0\} 
  \end{equation*}
  for every $b(x) \in \mathscr{D}$ with $b(x) \ne c(x)$.
\end{definition}

The goal is to find conditions on $c(x)$ and $b(x)$ such that
\begin{equation}
  {\mathcal Rg}(\mathcal{L}_{c(x)}) \cap {\mathcal
    Rg}(\mathcal{L}_{b(x)}) = \{0\}. 
\end{equation}
To find these we prove results about the analyticity of the temporal
Fourier transform over the range of 
$\mathcal{L}_{c(x)}$. 

\begin{definition}
The \emph{temporal Fourier transform} of a function $u(x,t)$, with support in
$\mathbb{R}^d \times \Rplus$, is defined by 
\begin{equation*}
  \hat{u}(x, k) = \frac{1}{2 \pi} \int_0^{\infty} u(x,t) e^{i k t} \,dt. 
\end{equation*}
\end{definition}

The  next proposition follows directly from analyticity in a strip of the
Fourier transform of an exponentially decaying function.  

\begin{proposition}\label{Ranalytic}
  If the solution $u(x,t)$ of the forward thermoacoustic problem
  in domain $D \subset \mathbb{R}^d$ has  exponential
  decay in time uniformly over the closure of $D$, then there is an
  open strip in $\mathbb{C}$ containing
  $\mathbb{R}_+$ such that 
  for each fixed $x \in D$ the
  real part of the temporal Fourier transform, ${\mathcal
    Re}(\hat{u})(x, k)$, is the restriction to $\Rplus$ of a
  function analytic in the strip. 
\end{proposition}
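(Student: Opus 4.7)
The plan is to exhibit an explicit analytic function on a complex strip containing $\Rplus$ whose restriction to $\Rplus$ agrees with $\mathrm{Re}(\hat{u})(x,\cdot)$. Let $\alpha > 0$ denote the uniform exponential decay rate, so that $|u(x,t)| \leq C e^{-\alpha t}$ for all $x \in \overline{D}$ and $t \geq 0$. Because $f$ is real and the wave equation has real coefficients, $u(x,t)$ is real-valued, and therefore for $k \in \Rplus$ the real part of the Fourier transform has the cosine representation
\begin{equation*}
\mathrm{Re}(\hat{u})(x,k) \;=\; \frac{1}{2\pi} \int_0^\infty u(x,t) \cos(kt) \, dt.
\end{equation*}
The kernel $\cos(kt)$ is entire in $k$ with growth controlled by $e^{|\mathrm{Im}(k)|\,t}$, and this is the key feature that allows extension to a two-sided strip rather than only a one-sided half-plane (which is all $e^{ikt}$ alone would give).

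Next I would define the candidate extension
\begin{equation*}
F(x,k) \;:=\; \frac{1}{2\pi}\int_0^\infty u(x,t)\cos(kt)\, dt, \qquad k \in S_\alpha \;:=\; \{k \in \C : |\mathrm{Im}(k)| < \alpha\},
\end{equation*}
and note $S_\alpha \supset \R \supset \Rplus$. Writing $k = k_1 + i k_2$, the elementary bound $|\cos(kt)| \leq e^{|k_2|t}$ combined with the exponential decay of $u$ yields $|u(x,t)\cos(kt)| \leq C e^{-(\alpha - |k_2|)t}$, which is integrable on $[0,\infty)$ whenever $|k_2| < \alpha$. If $k$ varies over a compact subset $K \subset S_\alpha$, then $\sup_{k \in K}|k_2| < \alpha$, so a single exponential dominates uniformly in $k$, giving local uniform convergence of the defining integral.

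Finally I would establish analyticity via Morera's theorem. For any closed triangular contour $\gamma$ contained in $S_\alpha$, the absolute integrability just established justifies Fubini's theorem, and
\begin{equation*}
\oint_\gamma F(x,k) \, dk \;=\; \frac{1}{2\pi}\int_0^\infty u(x,t) \oint_\gamma \cos(kt) \, dk \, dt \;=\; 0,
\end{equation*}
since $k \mapsto \cos(kt)$ is entire for each fixed $t$. Therefore $F(x,\cdot)$ is analytic on $S_\alpha$, and by construction $F(x,k) = \mathrm{Re}(\hat{u})(x,k)$ for $k \in \Rplus$, which completes the argument. I do not expect a substantive obstacle: this is a standard Paley--Wiener type extension, and the width $2\alpha$ of the strip is produced directly by the hypothesized decay rate. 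The only points requiring a modicum of care are verifying that $u$ is real (needed so that the Fourier integral splits cleanly into a cosine integral) and invoking Fubini with the correct dominating function on compacts in $S_\alpha$.
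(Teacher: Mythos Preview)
Your argument is correct and is precisely the standard fact the paper invokes without proof: the paper simply remarks that the proposition ``follows directly from analyticity in a strip of the Fourier transform of an exponentially decaying function,'' whereas you supply the details. One point worth highlighting is that you correctly recognize the need to pass to the cosine representation: the full transform $\hat u(x,k)=\frac{1}{2\pi}\int_0^\infty u(x,t)e^{ikt}\,dt$ is analytic only in the half-plane $\mathrm{Im}\,k>-\alpha$, and taking its real part pointwise on $\Rplus$ does not by itself yield an analytic function (the real part of a holomorphic function is merely harmonic). Your use of the reality of $u$ to rewrite $\mathrm{Re}(\hat u)$ as a cosine transform, which then extends to the symmetric strip $|\mathrm{Im}\,k|<\alpha$, is exactly the right mechanism and makes explicit what the paper leaves implicit.
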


Some sufficient conditions for exponential decay are known.

\begin{definition}
  An acoustic profile $c(x)$ is said to be \emph{non-trapping} if solutions, the
  \emph{bicharacteristics}, to
\begin{equation}
\left\{ \begin{array}{c}
\dot{x} = c^2 (x) \xi \,\,\,\,\,\,\,\,\,\,\,\,\,\,\,\,\,\,\,\,\\
\dot{\xi} =-\frac{1}{2} \nabla(c^2(x)) |\xi|^2 \\
x(0) = x_0,\, \xi(0) = \xi_0,
\end{array} \right.
\end{equation}
in $\mathbb{R}_{x,\xi}^{2n}$ have projections, \emph{rays}, in
$\mathbb{R}^d_x$ tending to infinity as $t \rightarrow 
\infty$ as long as $\xi_0 \ne 0.$
\end{definition}
If this holds, one has the following theorem of Vainberg.
\cite{Vainberg:1975,Vainberg:1989}. 
\begin{theorem}\label{vainberg_decay}
  If the non-trapping condition is satisfied, then for any multi-index
  $\alpha = (\alpha_0, \alpha_1, \dots, \alpha_d),$ the following
  estimate holds for solutions of the thermoacoustic forward problem:
  \begin{equation*}
    \left| \partial^{\alpha}_{(t,x)} u(x,t) \right| \le C \eta(t) \| f
    \|_{L^2}, \,\, x \in D.
  \end{equation*}
  Here the function $\eta(t)$ that characterizes the decay is
  $t^{-d-\alpha_0+1}$ when the dimension $d$ is even and $e^{-\epsilon
    t}$ when $d \geq 3$ is odd.
\end{theorem}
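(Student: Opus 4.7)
The plan is to reduce the time-domain decay estimate to a statement about the meromorphic continuation of the resolvent $R(k) = (-c^2\Delta - k^2)^{-1}$ across the real axis, and then to invert a Fourier/Laplace representation of the solution. Writing $u(\cdot,t) = \cos(t\sqrt{P})\,f$ with $P = -c^2\Delta$, I would represent $u$ restricted to $\overline{D}$ as a contour integral of $R(k)f$ against a kernel essentially of the form $e^{-ikt}$; the rate of decay in $t$ is then governed by how far this integrand admits an analytic extension below the real $k$-axis and by the growth bounds it enjoys there.

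First I would introduce a cutoff $\chi \in C_0^\infty(\R^d)$ equal to one on a neighborhood of $\overline{D} \cup \mathrm{supp}(1-c)$ and study $R_\chi(k) = \chi R(k)\chi$. In odd dimensions $d \geq 3$ the Schwartz kernel of $(-\Delta - k^2)^{-1}$ is an exponential times a polynomial in $1/|x-y|$, so the free cutoff resolvent extends to an entire operator-valued function of $k$ with bounds of the form $C\,e^{C|\mathrm{Im}\,k|}$; in even dimensions one only obtains continuation to the logarithmic cover, because of the $\log$ in the fundamental solution. A Lippmann--Schwinger identity relating $R_\chi$ to its free counterpart through a compactly supported multiplication coming from $1 - c^{-2}$, together with analytic Fredholm theory, continues $R_\chi$ as a meromorphic function inheriting these bounds away from a discrete set of possible poles (the resonances).

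The main obstacle is proving the absence of resonances in a strip $\{\mathrm{Im}\,k > -\varepsilon\}$ under the non-trapping hypothesis; this is the real content of Vainberg's theorem. I would argue this via a large-time parametrix for the propagator: using geometric optics along the bicharacteristics and the assumption that every nontrivial ray escapes every compact set, one constructs an approximate inverse to $\partial_t^2 + P$ whose error is smoothing on $\overline{D}$ after some finite $t_0$. Transported to the resonance side through propagation of singularities, this shows that any would-be resonance state with $\mathrm{Im}\,k > -\varepsilon$ would have its wavefront set flow off to infinity along bicharacteristics, contradicting the compact microsupport inherited from $\chi$. The conclusion is polynomial bounds on $R_\chi(k)$ on a strip of fixed width in the odd case and only on a logarithmic neighborhood of $\R$ in the even case.

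Finally, I would shift contours in the inversion formula. In odd dimensions, deforming the integration contour from $\R$ to $\R - i\varepsilon$ crosses no singularities and contributes the exponential factor $e^{-\varepsilon t}$; polynomial growth of the integrand on the shifted contour is absorbed by integration by parts, trading powers of $k$ for $L^2$ norms of derivatives of $f$. In even dimensions one must encircle the branch cut at $k=0$, where the logarithm combined with the $k^{d-1}$ arising in polar coordinates produces the polynomial decay $t^{-d-\alpha_0+1}$, each time derivative shifting the exponent as stated.
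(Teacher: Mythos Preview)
The paper does not prove this theorem at all: it is quoted as a result of Vainberg, with a citation to \cite{Vainberg:1975,Vainberg:1989}, and no argument is given. So there is nothing in the paper to compare your proposal against line by line.

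That said, what you have sketched is essentially the classical Vainberg strategy itself: meromorphic continuation of the cutoff resolvent via analytic Fredholm theory applied to a Lippmann--Schwinger perturbation of the free resolvent, a resonance-free strip (odd $d$) or logarithmic region (even $d$) obtained from the non-trapping hypothesis through a large-time outgoing parametrix and propagation of singularities, and finally contour deformation in the Fourier/Laplace representation of $\cos(t\sqrt{P})f$. Your outline is correct in spirit and matches the architecture of Vainberg's original proof. A few points would need tightening in a full write-up: you should be explicit that the cutoff resolvent obeys polynomial bounds in $|k|$ on the shifted contour (not just exponential in $|\mathrm{Im}\,k|$), since these are what make the contour integral absolutely convergent after enough integrations by parts; and the even-dimensional rate $t^{-d-\alpha_0+1}$ comes from a stationary-phase/Watson-type analysis of the branch-cut contribution near $k=0$, which deserves more than the single sentence you give it. But as a plan for reproducing the cited theorem, your proposal is sound.
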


By the theorem of Vainberg, analyticity of the temporal Fourier
transform in a neighborhood of the
positive real axis  will hold for non-trapping speeds in odd
dimensions greater than one. In the remainder of this paper, we shall
implicitly assume all acoustic profiles are non-trapping. Non-trapping
also ensures that for a single sound speed $c$, the thermoacoustic map
$\mathcal{L}_c$ is injective, \cite{FR:2009, SU:2009}.

\section{Relation of TAT to the Interior Transmission
  Problem}\label{sect:ITP2uniq} 

The following relation of the wave equation to the Helmholtz equation
is standard.
\begin{proposition}\label{inhomogen_Helm}
Let $u(x,t)$ satisfy
\begin{align}\label{fwd2}
\partial^2_t u(x,t) - c^2(x) \Delta u(x,t) &= 0 \textrm{ in }
\mathbb{R}^d \times \Rplus \\ \nonumber 
u(x,0) = f(x),\,\, \partial_t u(x,t) &= 0 \textrm{ on } \mathbb{R}^d \nonumber
\end{align}
and set $n(x) = \frac{1}{c^2(x)}$. For $k \in \mathbb{R}_+$ the
temporal Fourier transform $\hat{u}(x,k)$ satisfies 
\begin{equation}\label{inhom_Helm_eqtn}
\Delta \hat{u}(x,k) + k^2 n(x) \hat{u}(x,k) = \frac{i k}{2 \pi} n(x)
f(x) \textrm{ for } x \in \mathbb{R}^d
\end{equation}
and $U(x,k) = {\mathcal Re}(\hat{u})(x,k)$ satisfies
\begin{equation}\label{Helm_eqtn}
\Delta U(x,k) + k^2 n(x) U(x,k) = 0 \textrm{ for } x \in \mathbb{R}^d.
\end{equation}
\end{proposition}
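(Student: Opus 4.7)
The plan is to obtain the inhomogeneous Helmholtz equation \eqref{inhom_Helm_eqtn} by a direct computation: apply the temporal Fourier transform to the wave equation, moving time derivatives off of $u$ via integration by parts, and then derive \eqref{Helm_eqtn} by taking real parts. First, I would rewrite the wave equation as $\partial_t^2 u = c^2(x)\Delta u$ and multiply both sides by $\frac{1}{2\pi}e^{ikt}$. Integrating from $0$ to $\infty$, the right-hand side becomes $c^2(x)\Delta \hat{u}(x,k)$, provided I can swap the integral with the Laplacian; this exchange is justified by the uniform decay estimates on all space-time derivatives guaranteed by Vainberg's theorem (Theorem~\ref{vainberg_decay}) in conjunction with the non-trapping standing assumption.

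The core calculation is two integrations by parts on the left-hand side:
\begin{equation*}
\frac{1}{2\pi}\int_0^\infty \partial_t^2 u(x,t)\, e^{ikt}\,dt
 = \frac{1}{2\pi}\Bigl[\partial_t u\, e^{ikt}\Bigr]_0^\infty
 - \frac{ik}{2\pi}\int_0^\infty \partial_t u\, e^{ikt}\,dt,
\end{equation*}
and then
\begin{equation*}
-\frac{ik}{2\pi}\int_0^\infty \partial_t u\, e^{ikt}\,dt
 = -\frac{ik}{2\pi}\bigl[u\, e^{ikt}\bigr]_0^\infty - k^2\,\hat{u}(x,k).
\end{equation*}
The $t=\infty$ boundary terms vanish because $u$ and $\partial_t u$ decay uniformly on $\overline{D}$ by Vainberg's estimate (in odd dimensions $d\ge 3$ this decay is even exponential), while the $t=0$ contributions reduce using the initial conditions $\partial_t u(x,0)=0$ and $u(x,0)=f(x)$. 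Combining these computations gives $c^2(x)\Delta\hat{u}(x,k)=\frac{ik}{2\pi}f(x)-k^2\hat{u}(x,k)$, and dividing by $c^2(x)=1/n(x)$ yields \eqref{inhom_Helm_eqtn}.

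For the homogeneous equation \eqref{Helm_eqtn}, I would take real parts of \eqref{inhom_Helm_eqtn}. Since $n(x)$ is real, $f(x)$ is real-valued, and the prefactor $\frac{ik}{2\pi}$ is purely imaginary for $k\in\Rplus$, the right-hand side is purely imaginary, so $\mathrm{Re}$ annihilates it. The Laplacian and multiplication by $k^2 n(x)$ commute with the real-part operation, so $U=\mathrm{Re}(\hat u)$ satisfies $\Delta U+k^2 n(x)U=0$.

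The only subtlety, and what I would expect to be the main point that requires care rather than difficulty, is rigorously justifying the vanishing of boundary contributions at $t=\infty$ and the interchange of the spatial Laplacian with the temporal integral. Both are secured by Theorem~\ref{vainberg_decay}, which under the non-trapping hypothesis produces uniform-in-$x$ decay of $u$ together with all of its derivatives, making the integrations by parts and the differentiation under the integral sign legitimate for $x\in D$ and $k$ in a complex strip around $\Rplus$.
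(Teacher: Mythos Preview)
Your argument is correct and is precisely the standard computation the paper has in mind; note that the paper does not actually give a proof of this proposition, merely declaring the relation ``standard'' before stating it. Your integration-by-parts derivation and the observation that the right-hand side of \eqref{inhom_Helm_eqtn} is purely imaginary constitute exactly that standard proof, with the justification of boundary terms and differentiation under the integral via Theorem~\ref{vainberg_decay} being the appropriate care to take.
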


We now suppose that $c$ and $b$ are two acoustic profiles, and
$f_1,f_2$ two initial conditions supported in $D$ such that
$\mathcal{L}_{c(x)}(f_1)=\mathcal{L}_{b(x)}(f_2).$ Let $u,v$
denote the respective solutions of (\ref{forward_TAT_problem}). Then
as $c=b=1$ and $f_1=f_2=0$ in $\R^d \setminus D$, $u,v$ are solutions
of the (same) wave equation in the exterior domain, with the same
boundary value on $\partial D \times \R_+$ and with the same (zero) initial
conditions. Since the exterior initial boundary-value problem is
well-posed, the solutions are equal in the exterior domain. Let
$\hat{u}$ and $\hat{v}$ be the temporal Fourier transforms. This
implies that 
$\hat{u}(x,k) =\hat{v}(x,k)$ in $\R^d \setminus D$ for all $k$ so their
normal derivatives on $\partial D$ are equal. Since by hypothesis,
$u=v$ on $\partial D \times \Rplus,$ then 
$\hat{u}(x,k) = \hat{v}(x,k)$ for $x\in \partial D$.
Then $U = \Re(\hat{u})$ and $V = \Re(\hat{v})$
are solutions of the following problem.  

\begin{definition}
  A wavenumber $k\geq 0$ is called a \emph{transmission
    eigenvalue} if there 
  exists an non-trivial pair $(u,v) \in H^2(D) \times H^2(D)$ solving 
  the \emph{interior transmission problem} (ITP) relative
  to the acoustic profiles $c(x)$ and $b(x)$ in $D$ if
\begin{align}\label{interior_transmission}
\Delta u + k^2 n_c(x) u = 0 &\textrm{ in } D \\ \nonumber
\Delta v + k^2 n_b(x) v = 0 &\textrm{ in } D \\ \nonumber
u = v, \, \partial_{\nu} u = \partial_{\nu} v &\textrm{ on } \partial
D. \\ \nonumber 
\end{align}
Here, $\partial_{\nu}$ represents the outward normal derivative on
$\partial D$, $n_c(x)=c^{-2}(x)$ and $n_b(x)$ is defined similarly. 
\end{definition}

\begin{definition}
The \emph{real transmission eigenvalue spectrum} is the set of
non-negative real transmission eigenvalues.
\end{definition}

\begin{remark}
The interior transmission problem arose in scattering theory. In that
setting, one of the sound speeds is usually taken to be constant, but
here it is natural to assume that both are variable. Researchers in
scattering theory have profitably considered complex transmission
eigenvalues, but they play no role here. 
\end{remark}

The next theorem shows range separation conditions may be derived from
sparseness of the interior transmission spectrum. 

\begin{theorem}\label{clust_pt_spect_implies_uniq}
  Let $c(x)$ and $b(x)$ be non-trapping acoustic profiles in a domain $D$. If
  the complement of the interior transmission spectrum has a finite cluster
  point in $\Rplus$ then the intersection of the range of the thermoacoustic
  operators, $\mathcal{L}_{c(x)}$ and $\mathcal{L}_{b(x)}$ reduces to zero.
\end{theorem}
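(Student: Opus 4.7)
The plan is to assume $g$ lies in the intersection of the two ranges, write $g=\mathcal{L}_{c}f_1 = \mathcal{L}_{b}f_2$ for some $f_1, f_2 \in C^{\infty}_0(D)$, and show that the corresponding wave solutions $u,v$ of \eqref{forward_TAT_problem} must satisfy $u|_{\partial D \times \Rplus} = 0$. First, exactly the exterior uniqueness argument presented just before the definition of the transmission eigenvalue problem yields $\hat{u}(x,k)=\hat{v}(x,k)$ and $\partial_\nu\hat{u}(x,k)=\partial_\nu\hat{v}(x,k)$ on $\partial D$ for every real $k$. Setting $U=\Re\hat{u}$ and $V=\Re\hat{v}$, Proposition~\ref{inhomogen_Helm} shows that for every $k \in \Rplus$ the pair $(U(\cdot,k), V(\cdot,k))$ is a (possibly trivial) solution of the ITP for the profiles $c,b$ in $D$. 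Whenever $k$ lies outside the real transmission eigenvalue spectrum, the ITP has only the zero solution, so $U(\cdot,k)\equiv 0$ in $D$.

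Next, let $k_0 \in \Rplus$ be the finite cluster point supplied by the hypothesis; then for each fixed $x\in D$, the function $k \mapsto U(x,k)$ vanishes on a set accumulating at $k_0$. By Theorem~\ref{vainberg_decay}, $u$ decays exponentially on $\overline{D}$, so Proposition~\ref{Ranalytic} places $U(x,\cdot)$ in the class of functions that extend holomorphically to an open strip about $\Rplus$, and this strip contains $k_0$. The identity theorem then forces $U(x,k)\equiv 0$ for all $k \in \Rplus$. The same exponential decay gives uniform convergence of the Fourier integral on $\overline{D}$, so $\hat{u}(x,k)$ is continuous in $x$ up to $\partial D$, and the vanishing extends to $x\in\partial D$.

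To conclude, for each fixed $x\in\partial D$ we have $\Re\hat{u}(x,k)=\frac{1}{2\pi}\int_0^\infty u(x,t)\cos(kt)\,dt = 0$ for all $k\in \Rplus$; injectivity of the Fourier cosine transform (applied after extending $u(x,\cdot)$ evenly in $t$) then gives $u(x,t)=0$ for $t\ge 0$. Hence $g\equiv 0$, proving the theorem. The main subtle step is ensuring that the accumulation point $k_0$ sits inside a domain of holomorphy of $U(x,\cdot)$; this is exactly what the non-trapping hypothesis buys via Vainberg's theorem, and it is what makes the identity-theorem step legitimate. The remainder is bookkeeping between the ITP, the cosine transform, and the exterior uniqueness argument.
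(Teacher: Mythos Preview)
Your proof is correct and follows essentially the same route as the paper's: exterior uniqueness to match Cauchy data, Proposition~\ref{inhomogen_Helm} to place $(U,V)$ in the ITP, vanishing of $(U,V)$ off the transmission spectrum, and then the identity theorem via Proposition~\ref{Ranalytic} (fed by Vainberg's decay). The only difference is cosmetic: the paper compresses the final step to ``then $u,v$ must be zero,'' whereas you spell out the passage from $\Re\hat{u}\equiv 0$ to $u\equiv 0$ via injectivity of the cosine transform and the continuity extension to $\partial D$; this is a welcome clarification rather than a different argument.
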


\begin{proof} Let $u,v$ be the solutions corresponding to $f_1, f_2$
  with $\mathcal{L}_{c} f_1 = \mathcal{L}_{b} f_2.$  We have
  already observed that $U(x,k) = \Re(\hat{u})$ and 
 $V(x,k) =\Re(\hat{v})$ satisfy (\ref{interior_transmission}) for every $k\in
  \Rplus.$ If $k$ is not a transmission eigenvalue, then the only
  solution to (\ref{interior_transmission}) is the zero function,
  hence for such $k$, $U(x,k)=V(x,k)=0$ in $D$. Since by lemma
  \ref{Ranalytic} both $U$ and $V$ are real 
  analytic on the positive real axis,  if they are zero on set with a finite
  accumulation point, they are identically zero. Then $u,v$ must be zero.
\end{proof}

Therefore, any result which implies that the real transmission spectrum
associated to the pair $n_c,n_b$ is discrete, or has a positive lower
bound will imply that the ranges of the corresponding thermoacoustic
operators have only trivial intersection. If this holds for every pair
in a class $\mathcal{D}$, then acoustic profiles are uniquely
determined within the class.

On a domain $D$, let $n_c$ and $n_b$ be associated to acoustic
profiles $c$ and $b$. Define the \emph{contrast} to be
the difference:
\begin{equation*}
m_{cb}(x) = n_c(x) - n_b(x)
\end{equation*}
and note $\emph{supp}(m_{cb}) \subset \bar{D}.$ There are many results
in the literature giving conditions on $m_{cb}$ which are sufficient
to guarantee discrete spectrum, or a spectral gap. These are usually
insufficient for our purposes however, since in most cases there is
an assumption that $m_{cb}$ is discontinuous at some interface, while
we require that $m_{cb}$ be smooth. 

One easy result is the following theorem, which is similar to the lower
bound in \cite{CPS:2008}, for the case when $m_{cb}$ is either
non-negative or non-positive. Let $\lambda_0$ be the first eigenvalue
of the Dirichlet Laplacian in $D$ and set $n_i^* = \sup_{D} n_i(x)$
for $i = c,\,b.$ 

\begin{theorem}\label{ITspect_low_bnd}
If $k \in \Rplus$ is a transmission eigenvalue and $m(x) = n_c(x) -
n_b(x) \ge 0$ then  
\begin{equation*}
k \ge \sqrt{\frac{\lambda_0}{n_c^*}}.
\end{equation*}
If $m(x) = n_c(x) - n_b(x) \le 0$ then 
\begin{equation*}
k \ge \sqrt{\frac{\lambda_0}{n_b^*}}.
\end{equation*}
\end{theorem}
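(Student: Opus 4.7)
The plan is to reduce the eigenvalue bound to a comparison between the Dirichlet Laplacian energy of the difference $w=u-v$ and a Rayleigh-type quantity controlled by $k^2 n_c^*$. The key observation is that the transmission boundary conditions force $w\in H^2_0(D)$ (both $w$ and $\partial_\nu w$ vanish on $\partial D$), so the first Dirichlet eigenvalue $\lambda_0$ provides a Poincar\'e-type constraint $\int_D |\nabla w|^2 \geq \lambda_0 \int_D w^2$ that is strong enough to close the argument.

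First, I would subtract the two Helmholtz equations in \eqref{interior_transmission} to get
\begin{equation*}
(\Delta + k^2 n_c)\,w \;=\; -k^2 m\, v, \qquad w = u-v \in H^2_0(D).
\end{equation*}
Next, I would pair the two Helmholtz equations the other way: multiply $\Delta u + k^2 n_c u = 0$ by $v$ and $\Delta v + k^2 n_b v = 0$ by $u$, subtract, and apply Green's identity. Because $u=v$ and $\partial_\nu u = \partial_\nu v$ on $\partial D$, the boundary terms cancel and I obtain the orthogonality
\begin{equation*}
\int_D m\, u\,v\,dx \;=\; 0,
\end{equation*}
which (using $u=w+v$) rewrites as $\int_D m v w\,dx = -\int_D m v^2\,dx$.

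Now I would test the first displayed identity against $w$. Using $w\in H^1_0(D)$ to kill the boundary term in Green's first identity,
\begin{equation*}
\int_D |\nabla w|^2\,dx \;=\; k^2\!\int_D n_c w^2\,dx + k^2\!\int_D m v w\,dx \;=\; k^2\!\int_D n_c w^2\,dx - k^2\!\int_D m v^2\,dx.
\end{equation*}
When $m\geq 0$, the second term on the right is $\leq 0$, so
\begin{equation*}
\int_D |\nabla w|^2\,dx \;\leq\; k^2 n_c^*\!\int_D w^2\,dx.
\end{equation*}
Combined with $\int_D |\nabla w|^2 \geq \lambda_0 \int_D w^2$ and the fact that $w\not\equiv 0$ (to be justified below), this gives $\lambda_0 \leq k^2 n_c^*$, i.e. $k \geq \sqrt{\lambda_0/n_c^*}$. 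The case $m \leq 0$ follows by swapping the roles of $c$ and $b$ in the derivation, which changes the sign of $m$ but now produces $k^2 n_b^*$ on the right.

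The one place requiring care---and the likely main obstacle---is verifying $w\not\equiv 0$, since the estimate $\lambda_0 \leq k^2 n_c^*$ would otherwise be vacuous. If $w\equiv 0$ then $u=v$ is non-trivial and simultaneously satisfies both Helmholtz equations, so $k^2 m u = 0$ on $D$. Assuming $n_c \neq n_b$ (the only interesting case, otherwise every $k$ is trivially a transmission eigenvalue), the smoothness of $n_c, n_b$ forces $\{m\neq 0\}$ to contain a nonempty open set, on which $u\equiv 0$; unique continuation for the Helmholtz equation with smooth coefficient $n_c$ then yields $u\equiv 0$ in $D$, contradicting non-triviality of the eigenpair. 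Thus $w\not\equiv 0$, and the proof is complete.
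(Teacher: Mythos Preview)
Your argument is correct. The paper actually omits the proof of this theorem, noting only that it is similar to the lower bound in \cite{CPS:2008}, so there is no in-paper proof to compare against; your energy computation---testing $(\Delta+k^2 n_c)w=-k^2 m v$ against $w\in H^1_0(D)$, combining with the orthogonality $\int_D m\,uv=0$ from Green's second identity, and closing with the Poincar\'e inequality---is exactly the standard route. Your handling of the $w\equiv 0$ degenerate case via unique continuation is also correct, and you are right that the implicit hypothesis $n_c\neq n_b$ is needed (and is the only case of interest in the paper).
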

We omit the proof.

\begin{theorem}\label{partialuniq}
  Consider two profiles, $c(x)$ and $b(x)$, relative to a domain $D$. If 
  \begin{equation*}
    c(x)-b(x) \ge 0 \textrm{ or } c(x)-b(x) \le 0
  \end{equation*}
  in $D$ then the thermoacoustic data generated by the domain $D$ from
  the acoustics $c(x)$ cannot be generated by the 
  domain $D$ with the profile $b(x)$. That is, the intersection of the
  ranges of the operators $\mathcal{L}_{c(x)}$ and 
  $\mathcal{L}_{b(x)}$ is zero for any two acoustic profiles whose
  difference does not change signs in $D$. 
\end{theorem}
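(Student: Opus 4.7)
The plan is to glue together Theorems~\ref{ITspect_low_bnd} and~\ref{clust_pt_spect_implies_uniq}, after translating the hypothesis on $c-b$ into a hypothesis on the contrast $m_{cb}$. First I would observe that since the map $c\mapsto 1/c^{2}$ is strictly decreasing on $(0,\infty)$, the pointwise sign of $c(x)-b(x)$ controls the sign of $m_{cb}(x)=n_c(x)-n_b(x)=1/c^{2}(x)-1/b^{2}(x)$, but with the inequality reversed: if $c(x)-b(x)\ge 0$ in $D$ then $m_{cb}(x)\le 0$ in $D$, and if $c(x)-b(x)\le 0$ in $D$ then $m_{cb}(x)\ge 0$ in $D$. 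Either way, one of the two sign hypotheses of Theorem~\ref{ITspect_low_bnd} is satisfied.

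Applying Theorem~\ref{ITspect_low_bnd} then yields a strictly positive lower bound
\[
  k_0 \;=\; \sqrt{\lambda_0/n^{*}} \;>\; 0,
\]
where $n^{*}$ is $n_b^{*}$ or $n_c^{*}$ according to the sign. Consequently the entire interval $[0,k_0)\subset\Rplus$ lies in the complement of the real transmission eigenvalue spectrum associated with the pair $(n_c,n_b)$, and any interior point of this interval is a finite cluster point of that complement. Theorem~\ref{clust_pt_spect_implies_uniq} then applies directly and produces
\[
  {\mathcal Rg}(\mathcal{L}_{c(x)}) \cap {\mathcal Rg}(\mathcal{L}_{b(x)}) = \{0\},
\]
which is exactly the conclusion sought. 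The non-trapping hypothesis needed to invoke Theorem~\ref{clust_pt_spect_implies_uniq} is already in force by the blanket convention, stated earlier in the paper, that all acoustic profiles are taken to be non-trapping.

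I do not expect any genuine obstacle here; the argument is almost entirely bookkeeping once the two preceding theorems are on the table. The one point that requires a moment of care is tracking the direction of the inequality across the reciprocal map $c\mapsto 1/c^{2}$, so that the correct case of Theorem~\ref{ITspect_low_bnd} is matched to the correct case of the hypothesis on $c-b$; after that the positive gap $k_0>0$ automatically supplies the finite cluster point demanded by Theorem~\ref{clust_pt_spect_implies_uniq}.
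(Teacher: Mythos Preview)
Your proposal is correct and follows exactly the route the paper takes: translate the sign condition on $c-b$ into a sign condition on the contrast $m_{cb}$, invoke Theorem~\ref{ITspect_low_bnd} to obtain a positive lower bound on the transmission spectrum, and then apply Theorem~\ref{clust_pt_spect_implies_uniq}. Your write-up is in fact more explicit than the paper's own proof, which merely names the two theorems; your care in tracking the reversal of inequality through $c\mapsto 1/c^{2}$ is the only nontrivial bookkeeping, and you handle it correctly.
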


\begin{proof}
  The proof follows by noticing that $c(x) - b(x)$ not changing signs
  in $D$ is equivalent to $m(x)$ not changing signs in 
  $D$. Then one appeals to theorem \ref{ITspect_low_bnd} and theorem
  \ref{clust_pt_spect_implies_uniq}. 
\end{proof}

\begin{corollary}
  Suppose thermoacoustic data $h(x,t)$ on $\partial D \times \Rplus$
  is generated by an acoustic profile in some set 
  $\mathscr{D}$. Assume also that for every pair $c(x),\, b(x) \in \mathscr{D}$
  \begin{equation*}
    c(x) - b(x) \ge 0 \textrm{ or } c(x) - b(x) \le 0
  \end{equation*}
  on $D$. Then the acoustic profile generating data $h(x,t)$ is
  determined uniquely in $\mathscr{D}$. 
\end{corollary}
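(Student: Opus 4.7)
The statement is a direct consequence of Theorem \ref{partialuniq}, so the plan is essentially to unpack definitions and check that the sign-condition hypothesis on $\mathscr{D}$ lets us invoke that theorem for every competing pair.

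First I would fix the data $h(x,t)$ and let $c(x) \in \mathscr{D}$ be the profile of interest, together with an initial disturbance $f \in C^\infty_0(D)$ such that $\mathcal{L}_{c(x)} f = h$. Suppose that some other profile $b(x) \in \mathscr{D}$ together with some $g \in C^\infty_0(D)$ also produces the same data, i.e.\ $\mathcal{L}_{b(x)} g = h$. Then by construction $h \in \mathrm{Rg}(\mathcal{L}_{c(x)}) \cap \mathrm{Rg}(\mathcal{L}_{b(x)})$.

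Next I would apply the standing hypothesis on $\mathscr{D}$: the difference $c(x) - b(x)$ is either everywhere non-negative or everywhere non-positive on $D$. This is precisely the hypothesis of Theorem \ref{partialuniq}, so the intersection $\mathrm{Rg}(\mathcal{L}_{c(x)}) \cap \mathrm{Rg}(\mathcal{L}_{b(x)})$ reduces to $\{0\}$, and consequently $h \equiv 0$. To close the argument, I would separate two cases. If $h \not\equiv 0$, the above conclusion contradicts $\mathcal{L}_{b(x)} g = h$, so no such $b \ne c$ can exist, and $c$ is the unique element of $\mathscr{D}$ producing $h$. If $h \equiv 0$, injectivity of the thermoacoustic maps under the non-trapping assumption (recalled at the end of Section \ref{sect:range}, citing \cite{FR:2009,SU:2009}) forces $f = g = 0$, so $h$ is trivially generated by every profile but only by the zero disturbance.

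The argument has no real obstacle; the only point that requires care is the handling of the $h \equiv 0$ case and making explicit that Theorem \ref{partialuniq} applies pairwise across the entire class $\mathscr{D}$ because the sign-non-crossing hypothesis is assumed for every pair in $\mathscr{D}$, not just for a fixed reference profile. Beyond this, the proof is a one-line invocation of Theorem \ref{partialuniq}.
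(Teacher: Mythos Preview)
Your proposal is correct and matches the paper's approach: the paper states the corollary without proof, treating it as an immediate consequence of Theorem~\ref{partialuniq} together with Definition~\ref{range_to_uniqueness}, which is exactly the route you take. Your extra care with the trivial case $h\equiv 0$ is sound but goes slightly beyond what the paper makes explicit.
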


\section{ Radially Symmetric ITP}

In this section we restrict to  the class $\mathcal{D}_S$ of radially
symmetric acoustic profiles in a ball. We prove sparsity of the transmission
spectrum for a distinct pair from $\mathcal{D}_S.$  

We assume $d \geq 3$ is odd, and assume that $D$ is the unit ball
$B_1$. Let $S_1$ be the unit sphere.  The first step in the study is
to reduce the problem to an ordinary differential equation. Following
\cite{CM:1988, MPS:1994, MP:1994, CK:2013} the Helmholtz equation with
radial sound speed can be separated in spherical coordinates as a sum
of products of radial functions with spherical harmonics, where the
radial components satisfy an ordinary differential equation depending
on the degree of the spherical harmonic.  Using the independence of
spherical harmonics, a pair of solutions corresponding to two radial
acoustic profiles satisfies (\ref{interior_transmission}) for some $k^2$
if and only if the radial components corresponding to some spherical
harmonic have the same Cauchy data at $r=1.$ It is convenient to make
a Liouville transformation on the resulting radial differential
equations.  We summarize with the following lemmas.

\begin{lemma}\label{COV_spherical_harmonic_expansion}
  For a radially symmetric, smooth, refractive index $n(r)$ a solution $w(r,\theta)$ of 
  \begin{equation}\label{var_coef_helmholtz}
    \Delta w + k^2 n(r) w = 0
  \end{equation}
  is given by 
  \begin{equation}\label{spherical_expansion}
    w(r,\theta) = \sum_{j=0}^{\infty} \sum_{l = 0}^M f_{jl}(r) r^j Y_{jl}(\theta).
  \end{equation}
  The coefficient functions, $f_{jl}(r)$, satisfy the Sturm-Liouville equation
  \begin{equation}\label{STL1}
    \partial_r(r^{\gamma} \partial_r f_{jl}) + k^2 n(r) r^{\gamma}
    f_{jl} = 0 \textrm{ on } [0,1], 
  \end{equation} 
  where $\gamma = d+2j-1.$ The $f_{jl}(r)$  are bounded at $r=0$ and
  so are constant multiples of single such $f_j.$ Set $m =
  \frac{\gamma}{2}-1 = j 
  +\frac{d-3}{2}.$ Defining $X_{m}$ by the Liouville transformation,  
  \begin{equation}\label{Ltrans1}
    \eta = \int_0^r \sqrt{n(\sigma)} \,d\sigma,\, X_{m}(\eta) =
         [r^{2\gamma}n(r)]^{1/4}f_{j}(r) 
  \end{equation}
  then $X_{m}(\eta)$ satisfies
  \begin{equation}\label{nonrad_SL_eigenvalue_problem}
    -\ddot{X}_{m}(\eta) + \left(\frac{m(m+1)}{\eta^2} +
    p_m(\eta)\right) X_{m}(\eta) = k^2 X_{m}(\eta)  
    \textrm{ for } \eta \in [0,C],
  \end{equation}
  where
  \begin{equation}\label{Ltrans2}
    p_m(\eta) = \frac{1}{4}\frac{\ddot{n}(r)}{(n(r))^2} -
    \frac{5}{16}\frac{(\dot{n}(r))^2}{(n(r))^3}  
    + m(m+1)\left( \frac{1}{r^2 n(r)} - \frac{1}{\eta^2} \right)
  \end{equation}
  and $C = \int_0^1 \sqrt{n(s)} \,ds.$
\end{lemma}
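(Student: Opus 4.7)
My plan is threefold: first, separate variables in spherical coordinates to derive the radial ODE; second, use regularity at the origin to reduce to a single radial function $f_j$ for each $j$; and third, apply the Liouville transformation to put the equation into Schr\"odinger form and identify $p_m$.

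In $\mathbb{R}^d$ one has $\Delta = \partial_r^2 + \tfrac{d-1}{r}\partial_r + r^{-2}\Delta_{S^{d-1}}$ and $\Delta_{S^{d-1}}Y_{jl} = -j(j+d-2)Y_{jl}$. Substituting the ansatz (\ref{spherical_expansion}) into (\ref{var_coef_helmholtz}), the factor $r^j$ exactly cancels the singular $-j(j+d-2)/r^2$ contribution coming from $\Delta_{S^{d-1}}$, giving
\[
\Delta\bigl(f_{jl}(r) r^j Y_{jl}(\theta)\bigr) = r^j Y_{jl}(\theta)\!\left(f_{jl}'' + \frac{2j+d-1}{r} f_{jl}'\right),
\]
so linear independence of the $Y_{jl}$ forces $f_{jl}'' + (\gamma/r) f_{jl}' + k^2 n f_{jl} = 0$ with $\gamma = d+2j-1$; multiplication by $r^\gamma$ yields (\ref{STL1}). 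Since this equation has a regular singular point at $r=0$ with indicial exponents $0$ and $1-\gamma$, its bounded solutions form a one-dimensional subspace, and smoothness of $w$ at the origin forces each $f_{jl}$ to be bounded at $r=0$. Hence $f_{jl}$ is a scalar multiple of a single function $f_j$ depending only on $j$ (and on $n$, $k$).

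For the Liouville step, write (\ref{STL1}) as $(p f_j')' + k^2 \rho f_j = 0$ with $p = r^\gamma$, $\rho = r^\gamma n$, and set $h = (p\rho)^{1/4} = [r^{2\gamma}n]^{1/4}$. The standard Liouville substitution $\eta = \int_0^r \sqrt{\rho/p}\,d\sigma = \int_0^r \sqrt n\,d\sigma$ and $X_m = h f_j$ converts the equation to $-\ddot X_m + Q(\eta) X_m = k^2 X_m$, where a chain-rule computation (using $d\eta/dr = \sqrt n$) identifies
\[
Q(\eta) = \frac{1}{4}\frac{\ddot n}{n^2} - \frac{5}{16}\frac{(\dot n)^2}{n^3} + \frac{\gamma(\gamma-2)}{4\, r^2 n}.
\]
Since $m = \gamma/2 - 1$ gives $m(m+1) = \gamma(\gamma-2)/4$, adding and subtracting $m(m+1)/\eta^2$ isolates the centrifugal singularity $m(m+1)/\eta^2$ and absorbs the remainder $m(m+1)\bigl(1/(r^2 n) - 1/\eta^2\bigr)$ into $p_m$, yielding (\ref{nonrad_SL_eigenvalue_problem}) with $p_m$ as in (\ref{Ltrans2}). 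The endpoint is simply $C = \eta(1)$.

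The main obstacle is the bookkeeping in the Liouville step: the cross-terms from differentiating the weight $h$, together with the Jacobian $\sqrt n$, produce several intermediate contributions that must be grouped so that the pure $n$-derivative terms give $\tfrac{1}{4}\ddot n/n^2 - \tfrac{5}{16}(\dot n)^2/n^3$ while the $r^\gamma$-derivative terms give exactly $\gamma(\gamma-2)/(4 r^2 n)$. Each is routine, and the identification $m(m+1) = \gamma(\gamma-2)/4$ is precisely what makes the final form match the radial Schr\"odinger equation with angular momentum parameter $m$.
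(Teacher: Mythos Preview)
Your proof is correct and follows essentially the same route as the paper's, which simply declares ``everything is standard'' and cites references; you have supplied the computations the authors omit. The one point the paper singles out that you treat slightly differently is the justification for writing the expansion with the factor $r^j$: the paper attributes this to smoothness of $w$ together with the orthogonality of $Y_{jl}$ to lower-degree terms in the Taylor expansion of $w$ at the origin, whereas you take the ansatz as given and invoke smoothness only to force $f_{jl}$ bounded at $r=0$---but the underlying idea is the same, and your indicial-exponent argument for the one-dimensionality of bounded solutions is exactly what the paper's cited uniqueness result amounts to.
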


Note that, in the summation (\ref{spherical_expansion}), the index $M$
depends on the order $j$ and dimension $d$. The specific value of $M$ is not
important to our results, it is the dimension of the space of spherical
harmonics of order $j$ in dimension $d$.

\begin{proof} 
Everything is standard, except perhaps the form $f_{jl}(r)r^j$ which
results from the smoothness of $w$ and the orthogonality of the
spherical harmonics to lower degree terms in the Taylor expansion of
$w.$ The independence (up to constant) of $f_{jl}$ on $l$ results from the
singular S-L equation 
(\ref{STL1}) having a unique (normalized) bounded solution, see
\cite{GR:1988,Car:1993,Car:1997}. 
\end{proof}

The proof of the following lemma is included in the appendix, section
\ref{p_m_bounded_pf}. 
\begin{lemma}\label{p_m_bounded}
  For a radial acoustic profile $c(r)$ the coefficient function (\ref{Ltrans2})
  with $n(r) = c^{-2}(r)$ is bounded on the interval $[0,C],$ $C = \eta(1).$ 
\end{lemma}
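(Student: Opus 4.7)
The plan is to localize the issue at $r=0$. Away from the origin $\eta(r)$ is bounded below by a positive constant, $n(r)$ satisfies $n\ge 1/(c^{*})^{2}>0$, and $n\in C^\infty([0,1])$, so every summand in (\ref{Ltrans2}) is continuous on $(0,1]$. The first two terms $\tfrac14\ddot n/n^{2}-\tfrac{5}{16}(\dot n)^{2}/n^{3}$ are in fact continuous on all of $[0,1]$ since $n$ is smooth and bounded away from zero. The only obstruction is the last summand $m(m+1)\bigl(\tfrac{1}{r^{2}n(r)}-\tfrac{1}{\eta(r)^{2}}\bigr)$, in which each piece individually blows up like $r^{-2}$ as $r\to 0$; the whole proof reduces to showing a cancellation at the origin.

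To get that cancellation I would use the standard fact that a $C^\infty$ radial function on $\mathbb{R}^{d}$ is a smooth function of $|x|^{2}$. Hence $n(r)=\tilde n(r^{2})$ with $\tilde n\in C^\infty$, so the Taylor expansion contains only even powers: $n(r)=n_{0}+n_{2}r^{2}+O(r^{4})$ with $n_{0}=1/c(0)^{2}>0$; in particular $\dot n(0)=0$. Then $\sqrt{n(r)}=\sqrt{n_{0}}+\tfrac{n_{2}}{2\sqrt{n_{0}}}r^{2}+O(r^{4})$ and integration gives $\eta(r)=\sqrt{n_{0}}\,r+\tfrac{n_{2}}{6\sqrt{n_{0}}}r^{3}+O(r^{5})$. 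Squaring, $\eta(r)^{2}=n_{0}r^{2}+\tfrac{n_{2}}{3}r^{4}+O(r^{6})$, whereas directly $r^{2}n(r)=n_{0}r^{2}+n_{2}r^{4}+O(r^{6})$. Subtracting,
\[
\frac{1}{r^{2}n(r)}-\frac{1}{\eta(r)^{2}}=\frac{\eta^{2}-r^{2}n}{r^{2}n\,\eta^{2}}=\frac{-\tfrac{2}{3}n_{2}\,r^{4}+O(r^{6})}{n_{0}^{2}\,r^{4}+O(r^{6})},
\]
which tends to the finite limit $-2n_{2}/(3n_{0}^{2})$ as $r\to 0$. Combined with continuity on $(0,1]$, the third summand is continuous, hence bounded, on $[0,1]$.

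Since $r\mapsto\eta(r)$ is a smooth monotone bijection of $[0,1]$ onto $[0,C]$, boundedness of $p_{m}$ viewed as a function of $r$ on $[0,1]$ is equivalent to boundedness as a function of $\eta$ on $[0,C]$, which is the claim. The main obstacle is precisely the cancellation of the two $r^{-2}$ singularities, and it rests on $\dot n(0)=0$; this vanishing is genuinely a consequence of $c\in C^\infty(\mathbb{R}^{d})$ being radial, not merely of $c$ being smooth along the radial interval $[0,1]$.
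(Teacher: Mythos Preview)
Your proof is correct and follows essentially the same strategy as the paper's: the first two terms are bounded by smoothness and positivity of $n$, and the issue reduces to showing $\frac{1}{r^{2}n(r)}-\frac{1}{\eta(r)^{2}}$ has a finite limit at $r=0$, which rests on the evenness of $n$ (equivalently $\dot n(0)=0$). The only difference is computational---you use a direct Taylor expansion, whereas the paper factors via a difference of squares, writes $\eta-\sqrt{n}\,r=-\int_{0}^{r}s\phi'(s)\,ds$ with $\phi=\sqrt{n}$ by integration by parts, and finishes with l'H\^opital; both arrive at the same finite limit.
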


We will use $X_{m}(\eta)$ to denote a solution of equation
(\ref{nonrad_SL_eigenvalue_problem}). The boundedness of $f_j$ at $r=0$
imposes a boundary condition on the $X_{m}(\eta)$ at $\eta = 0,$ namely
\begin{equation}\label{init_cond}
\lim_{\eta \rightarrow 0} \eta^{-(m+1)} X_{m}(\eta) < \infty,
\end{equation} 
since $\frac{\eta}{r}$ has a finite positive limit at $r=0.$

Following \cite{GR:1988,Car:1993,Car:1997}, 
(\ref{nonrad_SL_eigenvalue_problem}) has a fundamental set of
solutions $X_{m1}, X_{m2}$ satisfying 
\begin{equation}
\lim_{\eta \rightarrow 0} \eta^{-(m+1)} X_{m1}(\eta,k) = 1,
\,\lim_{\eta \rightarrow 0} \eta^{m} X_{m2}(\eta,k) = 1. 
\end{equation}
From (\ref{init_cond}) we see that $X_{m}(\eta,k)$ must be a constant
  multiple of $X_{m1}.$ 

We now consider two radially symmetric acoustic profiles.  If $k$ is a
transmission eigenvalue for the pair on $B_1$, then equality of the
Cauchy data of the corresponding Helmholtz equations implies equality
of each spherical harmonic component of the Cauchy data, and
non-triviality of the solution implies that some term in the spherical
harmonic expansion is non-trivial. Combined with the respective
Liouville transformations, this proves most of the following result.
\begin{lemma}\label{nonradial_ITP_reformulation}
  For two radially symmetric acoustic profiles $c(r)$ and $b(r)$ on
  $B_1(0) \subset \mathbb{R}^d$ the transmission 
  spectrum is equal to the set of all $k \in \Rplus$ such that, for
  some $m = j + \frac{1}{2}(d-3),$ there exists 
  non-trivial solutions $(X_m(\eta,k), Z_m(\xi,k)) \in C^2((0,C])
    \times C^2((0,B])$ satisfying 
  \begin{align}\label{nonradial_ITP_eqtn}
    \ddot{X}_m(\eta, k) + \left(k^2 - \frac{m(m+1)}{\eta^2} \right)
    X_m(\eta, k) &= p_{1m}(\eta) X_m(\eta, k),\, \eta \in [0,C]
    \\ \nonumber 
    \ddot{Z}_m(\xi, k) + \left(k^2 - \frac{m(m+1)}{\xi^2} \right)
    Z_m(\xi, k) &= p_{2m}(\xi) Z_m(\xi, k),\, \xi \in [0,B]
    \\ \nonumber 
    \lim_{\eta \rightarrow 0} \eta^{-(m+1)} X_m(\eta, k) < \infty, &\,
    \lim_{\xi \rightarrow 0} \xi^{-(m+1)} Z_m(\xi, k) < \infty
    \\ \nonumber 
    X_m(C, k) = Z_m(B, k), \, \dot{X}_m(C, k) &= \dot{Z}_m(B, k).
  \end{align} 
\end{lemma}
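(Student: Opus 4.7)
The plan is to establish both directions of the equivalence by applying spherical harmonic separation of variables (Lemma \ref{COV_spherical_harmonic_expansion}) together with the Liouville transformation (\ref{Ltrans1}) used independently on each profile. The forward direction is essentially a matter of unpacking what Lemma \ref{COV_spherical_harmonic_expansion} has already produced; the converse requires reconstructing an $H^2 \times H^2$ ITP pair from a single ODE mode.

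For the forward direction, let $(u,v) \in H^2(B_1)^2$ be a non-trivial solution of (\ref{interior_transmission}); by elliptic regularity with smooth coefficients, $u,v$ are smooth in $B_1$. Lemma \ref{COV_spherical_harmonic_expansion} gives expansions
\begin{equation*}
  u(r,\theta) = \sum_{j,l} a_{jl}\, f_j^{c}(r)\, r^j Y_{jl}(\theta), \qquad
  v(r,\theta) = \sum_{j,l} b_{jl}\, f_j^{b}(r)\, r^j Y_{jl}(\theta),
\end{equation*}
where $f_j^{c}, f_j^{b}$ are the unique (up to constant) bounded solutions of (\ref{STL1}) for $n_c, n_b$. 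Orthogonality of the $Y_{jl}$ on $S_1$ applied to the matching conditions $u=v, \partial_\nu u = \partial_\nu v$ yields, for each $(j,l)$, $a_{jl} f_j^{c}(1) = b_{jl} f_j^{b}(1)$ and $a_{jl} (f_j^{c})'(1) = b_{jl} (f_j^{b})'(1)$. Since $f_j^{c}, f_j^{b}$ are nontrivial bounded solutions of a regular second-order ODE near $r=1$, the only way one side can vanish is by both coefficients in that mode vanishing; non-triviality of $(u,v)$ therefore selects an index $(j,l)$ with $a_{jl}, b_{jl}$ both non-zero. Setting $m = j + (d-3)/2$ and applying the Liouville transformation separately for each profile produces $X_m = a_{jl}[r^{2\gamma} n_c]^{1/4} f_j^{c}$ on $[0,C]$ and $Z_m = b_{jl}[r^{2\gamma} n_b]^{1/4} f_j^{b}$ on $[0,B]$, which satisfy the ODEs of (\ref{nonradial_ITP_eqtn}) with $p_{1m}, p_{2m}$ the coefficients (\ref{Ltrans2}) for $n_c, n_b$, bounded by Lemma \ref{p_m_bounded}. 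The boundedness of $f_j$ at $r=0$ combined with $\eta/r \to \sqrt{n(0)} > 0$ gives the singular-endpoint condition (\ref{init_cond}) for both $X_m$ and $Z_m$.

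For the regular-endpoint matching, Definition \ref{acoustic_profile} forces $n_c \equiv n_b \equiv 1$ in a neighborhood of $\partial B_1$. Near $r=1$ the Jacobian $d\eta/dr = \sqrt{n(r)}$ equals one and the Liouville prefactor $[r^{2\gamma} n(r)]^{1/4}$ collapses to $r^{\gamma/2}$. A direct computation then gives $X_m(C) = a_{jl} f_j^{c}(1)$, $\dot{X}_m(C) = a_{jl}\bigl(\tfrac{\gamma}{2} f_j^{c}(1) + (f_j^{c})'(1)\bigr)$, and the analogous identities for $Z_m$ at $\xi = B$. The two Cauchy equalities at $r=1$ thus transport to $X_m(C,k) = Z_m(B,k)$ and $\dot{X}_m(C,k) = \dot{Z}_m(B,k)$.

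For the converse, given a non-trivial pair $(X_m, Z_m)$ solving (\ref{nonradial_ITP_eqtn}), invert each Liouville transformation to recover bounded radial functions $f_j^{c}, f_j^{b}$ on $[0,1]$ whose Cauchy data at $r=1$ agree (the boundary calculation above read backward, again using the vanishing of $1-n_c, 1-n_b$ near $\partial B_1$). For any fixed spherical harmonic $Y_{jl}$ with $j = m - (d-3)/2$, set $u(r,\theta) = f_j^{c}(r) r^j Y_{jl}(\theta)$ and $v(r,\theta) = f_j^{b}(r) r^j Y_{jl}(\theta)$; these solve the Helmholtz equations in (\ref{interior_transmission}) by Lemma \ref{COV_spherical_harmonic_expansion} and share Cauchy data on $S_1$, certifying $k$ as a transmission eigenvalue. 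The only delicate point throughout is the boundary translation at $r=1$, where two distinct Liouville maps with endpoints $C \neq B$ must carry the single interface condition consistently; this is made exact by the profiles being identically one near $\partial B_1$, so prefactor and Jacobian agree for both sides at the boundary.
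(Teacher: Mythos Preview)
Your proof is correct and follows the same route as the paper: reduce to spherical harmonic modes via Lemma \ref{COV_spherical_harmonic_expansion}, pick a nontrivial mode, and translate the Cauchy matching at $r=1$ through each Liouville transform using $n_c(1)=n_b(1)=1$ and $n_c'(1)=n_b'(1)=0$. The paper relegates the forward separation-of-variables step to the paragraph preceding the lemma and only remarks that the endpoint equalities ``require tracing through the Liouville transform''; you have carried out that trace explicitly (the $\tfrac{\gamma}{2}$ term from differentiating the prefactor, and the unit Jacobian at the boundary), and also written out the converse that the paper leaves implicit.
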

In the statement of the lemma, recall that $n_c(r) = c^{-2}(r)$, $n_b(r) =
b^{-2}(r)$. The Liouville transform (\ref{Ltrans1}) then yields the coefficients
$p_{1m}(\eta)$ and $p_{2m}(\xi)$ defined in (\ref{Ltrans2}). Here $\xi = \xi(r)$
is the new independent variable introduced by (\ref{Ltrans1}) using
$n_b(r)$. The right endpoints are defined by $C = \int_0^1 \sqrt{n_c(s)} \,ds$
and $B = \int_0^1 \sqrt{n_b(s)} \,ds$.

\begin{proof}
  The only matter left to check is the equality of $X_m, Z_m$ and
  their derivative at the endpoints of their respective
  intervals. This just requires tracing through their definition by
  the Liouville transform. 
\end{proof}

\begin{remark}
  The condition above depends on $m= j + \frac{1}{2}(d-3)$. Thus the
  transmission spectrum in odd dimensions greater than three is a subset of the
  transmission spectrum in dimension three.  (However, the multiplicity of each
  transmission eigenvalue is greater, since the dimension of the space of
  spherical harmonics grows with dimension.) Any result which implies sparseness
  in dimension three implies sparseness in higher odd dimensions.
\end{remark}

Let $X_{m1}(\eta,k)$ and $Z_{m1}(\xi,k)$ be the solutions of
(\ref{nonradial_ITP_eqtn}) corresponding to $c(r)$ and $b(r),$
respectively, satisfying
\begin{equation*}
\lim_{\eta \rightarrow 0} \eta^{-(m+1)} X_{m1}(\eta, k) = 1 \textrm{ and } 
\lim_{\xi \rightarrow 0} \xi^{-(m+1)} Z_{m1}(\xi, k) = 1.
\end{equation*}
We denote the Wronskian of two fundamental solutions evaluated at
different endpoints by  
\begin{equation*}
W(Z_{m1}(B,k), X_{m1}(C,k)) = Z_{m1}(B,k) \dot{X}_{m1}(C,k) -
\dot{Z}_{m1}(B,k) X_{m1}(C,k). 
\end{equation*}

\begin{corollary}\label{nonradial_det_cond_thm}
  Let $d \geq 3$ be odd. For two radially symmetric acoustic speeds
  $c(r)$ and $b(r)$ on $B_1(0) \subset \mathbb{R}^d$, $k \in \Rplus$
  is a transmission eigenvalue if and only if
  \begin{equation}
    W(Z_{m1}(B,k), X_{m1}(C,k)) = 0 
  \end{equation}
  for some integer $m \ge \frac{1}{2}(d-3)$.
\end{corollary}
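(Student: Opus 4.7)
The plan is to derive the corollary as an essentially immediate consequence of Lemma \ref{nonradial_ITP_reformulation} combined with the observation that the boundedness condition at the origin uniquely selects one solution (up to scalar multiple) of each radial ODE, reducing the matching condition at the right endpoints to a $2\times 2$ homogeneous linear system in two scalar parameters.

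First I would invoke Lemma \ref{nonradial_ITP_reformulation}: $k \in \Rplus$ is a transmission eigenvalue for the pair $(c,b)$ if and only if, for some $m = j + \tfrac{d-3}{2}$ with $j \in \mathbb{N}_0$ (so $m \ge \tfrac{d-3}{2}$), there exist nontrivial $X_m(\eta,k)$ on $[0,C]$ and $Z_m(\xi,k)$ on $[0,B]$ solving the two Liouville-transformed equations, satisfying the $\eta^{-(m+1)}$-boundedness condition at $0$, and matching Cauchy data at $\eta=C$, $\xi=B$. Next I would use that each of the two singular Sturm--Liouville equations has a two-dimensional solution space spanned by the fundamental solutions with prescribed behavior at $0$ (as cited via \cite{GR:1988,Car:1993,Car:1997} above), and that the boundedness condition $\lim_{\eta \to 0} \eta^{-(m+1)} X_m(\eta,k) < \infty$ excludes the $X_{m2}$-type solution (which behaves like $\eta^{-m}$) and forces $X_m(\eta,k) = \alpha\, X_{m1}(\eta,k)$ for some scalar $\alpha$. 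An identical argument gives $Z_m(\xi,k) = \beta\, Z_{m1}(\xi,k)$.

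With this reduction, the matching conditions $X_m(C,k) = Z_m(B,k)$ and $\dot{X}_m(C,k) = \dot{Z}_m(B,k)$ become the homogeneous $2 \times 2$ linear system
\begin{equation*}
\begin{pmatrix} X_{m1}(C,k) & -Z_{m1}(B,k) \\ \dot{X}_{m1}(C,k) & -\dot{Z}_{m1}(B,k) \end{pmatrix}
\begin{pmatrix} \alpha \\ \beta \end{pmatrix} = \begin{pmatrix} 0 \\ 0 \end{pmatrix}
\end{equation*}
in $(\alpha,\beta)$. Nontriviality of the pair $(X_m,Z_m)$ in the statement of Lemma \ref{nonradial_ITP_reformulation} amounts to $(\alpha,\beta) \neq (0,0)$, which by elementary linear algebra occurs if and only if the determinant of the coefficient matrix vanishes, i.e.\
\begin{equation*}
Z_{m1}(B,k)\,\dot{X}_{m1}(C,k) - \dot{Z}_{m1}(B,k)\,X_{m1}(C,k) = W(Z_{m1}(B,k),X_{m1}(C,k)) = 0.
\end{equation*}
Combining with the previous step yields the stated equivalence, with $m$ ranging over integers $m \ge \tfrac{d-3}{2}$.

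The only potentially subtle point is ensuring that the boundedness-at-zero condition genuinely reduces the solution space to one dimension; I would handle this by pointing to the cited singular Sturm--Liouville theory, whose applicability is guaranteed here by Lemma \ref{p_m_bounded}. Once that reduction is invoked, the remainder of the argument is purely linear algebraic and requires no further calculation.
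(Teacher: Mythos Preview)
Your proposal is correct and follows essentially the same approach as the paper: invoke Lemma~\ref{nonradial_ITP_reformulation}, use the boundedness condition at $0$ to force $X_m=\alpha X_{m1}$ and $Z_m=\beta Z_{m1}$, and then observe that the endpoint matching becomes a $2\times2$ homogeneous system whose nontrivial solvability is exactly the Wronskian condition. The paper's proof is simply a more compressed version of what you wrote.
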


\begin{proof}  
  By Theorem \ref{nonradial_ITP_reformulation}, $k$ is transmission
  eigenvalue if and only if there exists a non-trivial pair $X_m, Z_m$ as in the
  theorem. These must be multiples of $X_{m1}, Z_{m1}$ by the
  condition at $0.$ The equalities at the right endpoints imply
  $\alpha X_{m1}(C)= \beta Z_{m1}(B)$ and similarly for the
  derivatives, which is a linear system for $\alpha, \beta$  which has
  a non-trivial 
  solution if and only if the 
  Wronskian condition holds. 
\end{proof}

We will often use the shorter notation
\begin{equation*}
d_m(k) = Z_{m1}(B) \dot{X}_{m1}(C) - \dot{Z}_{m1}(B) X_{m1}(C)
\end{equation*}
As in the case of the usual transmission eigenvalue
problem, the determinant is the restriction to the positive real axis
of an entire function of exponential type. We note the following
\begin{proposition}\label{uncount}
  The transmission spectrum associated to radial acoustic profiles $c(r),b(r)$
  is uncountable if and only if for some $m$, $W(Z_{m}(B),X_{m}(C))$ is
  identically zero as a function of $k.$
\end{proposition}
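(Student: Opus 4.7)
The plan is to reduce the statement to an application of the identity theorem for entire functions of one complex variable, using the characterization of the transmission spectrum provided by Corollary \ref{nonradial_det_cond_thm}. By that corollary, the transmission spectrum $\Sigma$ associated to $c(r)$ and $b(r)$ is precisely
\[
\Sigma \;=\; \bigcup_{m \ge (d-3)/2} \{\, k \in \Rplus : d_m(k) = 0 \,\},
\]
where $m$ ranges over the countable set $\{(d-3)/2 + j : j = 0,1,2,\ldots\}$. Thus $\Sigma$ is expressed as a countable union of real zero sets of the Wronskian determinants $d_m(k)$.

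The key input I would then invoke is the analyticity assertion stated in the paragraph just preceding the proposition: each $d_m(k)$ is the restriction to $\Rplus$ of an entire function of $k \in \mathbb{C}$ of exponential type. In passing one justifies this by noting that the fundamental solutions $X_{m1}(\eta, k)$ and $Z_{m1}(\xi, k)$ are determined by second-order linear ODEs whose coefficients depend polynomially on $k^2$, with normalizations at the regular singular points $\eta = 0$ and $\xi = 0$ that are independent of $k$. Standard analytic-dependence theorems for ODEs in a complex parameter then yield entirety in $k$ of the solutions and of their derivatives at the fixed endpoints $C$ and $B$, so that $d_m(k)$ inherits entirety as a finite algebraic combination of such quantities.

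With these two ingredients in hand, both directions follow immediately. If $d_m \not\equiv 0$ for every admissible $m$, then by the identity theorem the zero set $\{k \in \mathbb{C} : d_m(k) = 0\}$ is a discrete, hence at most countable, subset of $\mathbb{C}$; its intersection with $\Rplus$ stays countable; and a countable union of countable sets is countable, so $\Sigma$ is countable. Conversely, if $d_m \equiv 0$ for some single $m$, then by Corollary \ref{nonradial_det_cond_thm} every $k \in \Rplus$ is a transmission eigenvalue, so $\Sigma \supseteq \Rplus$ is uncountable. The only delicate step is the entirety claim, which the excerpt already asserts and which is a standard consequence of analytic parameter dependence for ODEs; beyond that, the argument is pure complex analysis together with countability of the index set in $m$.
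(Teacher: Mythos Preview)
Your proof is correct and follows essentially the same route as the paper: both use Corollary~\ref{nonradial_det_cond_thm} to write the spectrum as a countable union over $m$ of zero sets of the entire functions $d_m$, and both invoke the identity theorem to conclude that uncountability of the spectrum forces $d_m\equiv 0$ for some $m$. The paper's version is slightly terser (it argues the converse directly by pigeonhole rather than by contrapositive), but the logical content is the same.
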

\begin{proof} If $d_m(k)$ is identically zero, then every $k$
  is a transmission eigenvalue, since $X_m$ and $Z_m$ are non-trivial.
  Conversely, if the transmission spectrum is uncountable, then the
  transmission spectrum is uncountable for some $m$, and so $d_m(k)$
  determinant vanishes on an uncountable set. Since $d_m$ is analytic, it
  vanishes identically.
\end{proof}

Let us now assume that the transmission spectrum associated to radial sound
speeds $b(r),c(r)$ is uncountable. Our main result of this section is the
following theorem.
\begin{theorem}\label{main_rad} 
  Let $b(r),c(r)$ be radial sound speeds with $n_b$, $n_c$ the associated index
  of refraction. If the transmission spectrum for the pair $n_b,n_c$ is
  uncountable, then $n_c=n_b.$
\end{theorem}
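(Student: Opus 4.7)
The hypothesis together with Proposition \ref{uncount} yields an integer $m \ge (d-3)/2$ for which $d_m(k) \equiv 0$ as a function of $k$. My plan is in three steps: (i) show $B = C$; (ii) deduce $p_{1m} \equiv p_{2m}$ on $[0,C]$; (iii) invert the Liouville transformation (\ref{Ltrans1})--(\ref{Ltrans2}) to recover $n_c \equiv n_b$.

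For (i), I would exploit the zero sets of the entire factors of $d_m$. At any Dirichlet eigenvalue $k_0$ of the Sturm--Liouville operator $-\partial_\eta^2 + m(m+1)/\eta^2 + p_{1m}(\eta)$ on $[0,C]$ with the Frobenius-regular boundary behavior at the origin, $X_{m1}(C,k_0) = 0$ but $\dot X_{m1}(C,k_0) \ne 0$ by IVP uniqueness, so $d_m(k_0) = 0$ forces $Z_{m1}(B, k_0) = 0$; thus the two Dirichlet spectra coincide, and Weyl's law $k_n^{(1)} \sim n\pi/C$, $k_n^{(2)} \sim n\pi/B$ forces $C = B$. The same conclusion can be read off directly from the high-frequency Liouville--Green asymptotics of the regular solutions (which are perturbations of $\sqrt{\eta}\, J_{m+1/2}(k\eta)$), whose leading order reduces $d_m(k) \equiv 0$ to $\sin(k(C-B)) \equiv 0$.

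For (ii), the analogous argument applied to zeros of $\dot X_{m1}(C,\cdot)$ (where $X_{m1}(C,\cdot)$ is nonzero by the same IVP uniqueness) shows that the two Neumann spectra also coincide on the common interval $[0,C]$. By Borg's two-spectra theorem, extended to the Bessel-type singular endpoint at $\eta=0$ as in the inverse spectral theory for radial Schr\"odinger operators, coincidence of the Dirichlet and Neumann spectra with the common Frobenius-regular boundary behavior at $0$ forces $p_{1m} \equiv p_{2m}$ on $[0,C]$. Equivalently, $d_m \equiv 0$ gives equality of the Weyl--Titchmarsh $m$-functions $\dot X_{m1}/X_{m1}$ and $\dot Z_{m1}/Z_{m1}$ at $\eta=C$, and a Borg--Marchenko uniqueness result in the singular setting yields the same conclusion. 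For (iii), writing $\tilde n(\eta) = n(r(\eta))$, the formula (\ref{Ltrans2}) transcribes into a second-order ODE for $\tilde n$ coupled to $dr/d\eta = 1/\sqrt{\tilde n}$; together with $r(0) = 0$, $\tilde n'(0) = 0$ (from radial smoothness, which gives $\dot n(0) = 0$), $\tilde n(C) = 1$, and $r(C) = 1$ (the acoustic profile condition $n(1) = 1$), this pins down $(\tilde n, r)$ uniquely from $(p_m, C)$, so $p_{1m} = p_{2m}$ and $C_c = C_b$ yield $n_c \equiv n_b$.

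The principal obstacle is step (ii): the classical Borg and Borg--Marchenko theorems assume regular endpoints, and the $\eta^{-2}$ centrifugal singularity at $\eta = 0$ requires invoking the inverse spectral theory for Bessel-type singular Sturm--Liouville operators. Step (i) is a standard WKB/oscillation argument, and the Liouville inversion in (iii) is classical inverse Sturm--Liouville theory; the most delicate point there is showing that the shooting parameter $\tilde n(0) = n(0)$ is uniquely pinned down by the endpoint constraint $r(C) = 1$, which follows from monotonicity of the map $\tilde n(0) \mapsto \int_0^C d\eta/\sqrt{\tilde n(\eta)}$ within the admissible class.
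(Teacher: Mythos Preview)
Your steps (i) and (ii) match the paper's argument closely: the paper also obtains $B=C$ from the large-$k$ asymptotics of $d_m$, and then deduces equality of the Dirichlet and Neumann spectra exactly as you do, invoking Carlson's two-spectra theorem for Bessel-type operators (your ``Borg for the singular endpoint'') to conclude $p_{1m}=p_{2m}$ on $[0,C]$.

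The gap is in step (iii). Your shooting argument hinges on monotonicity of the map $\tilde n(0)\mapsto \int_0^C d\eta/\sqrt{\tilde n(\eta)}$, but varying the initial value $\tilde n(0)$ perturbs the entire trajectory $\tilde n(\cdot)$ through a \emph{nonlinear} second-order equation containing the singular term $m(m+1)\bigl(\frac{1}{r^2 n}-\frac{1}{\eta^2}\bigr)$; there is no evident comparison principle, and the claimed monotonicity is neither obvious nor proved. You also overlook a boundary condition that dissolves the difficulty: the acoustic-profile hypothesis $\mathrm{supp}(1-c)\subset B_1$ forces $c\equiv 1$ near $r=1$, so not only $n(1)=1$ but also $n'(1)=0$. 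This gives \emph{full Cauchy data at $\eta=C$}, namely $\tilde n(C)=1$, $\dot{\tilde n}(C)=0$, $r(C)=1$, and turns your boundary-value problem into an initial-value problem solvable backward from $C$; no shooting is needed.

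The paper goes one step further and linearizes: setting $u_i(\eta)=n_i^{1/4}(r_i(\eta))\,r_i(\eta)^{-m}$, a direct computation shows each $u_i$ satisfies the \emph{linear} equation
\[
\ddot u_i-\Bigl(p_{im}(\eta)+\tfrac{m(m+1)}{\eta^2}\Bigr)u_i=0,\qquad u_i(C)=C^{-m},\quad \dot u_i(C)=-mC^{-(m+1)},
\]
so $p_{1m}=p_{2m}$ and ODE uniqueness give $u_1=u_2$ on $(0,C]$. Taking reciprocals, squaring, and using $\dot r_i=n_i^{-1/2}$ yields $(r_1^{2m+1}-r_2^{2m+1})'=0$; since $r_1(C)=r_2(C)=1$ this gives $r_1=r_2$, and differentiating once more gives $n_c=n_b$. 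This replaces your unjustified monotonicity step with a two-line IVP uniqueness argument.
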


\begin{proof}
  The proof has two steps.  The first is to establish that $B=C;$ that is, that
  the interval $[0,1]$ has the same length in the respective slowness
  metrics. In the case of the standard transmission eigenvalue problem when one
  of the speeds is unity, there is an asymptotic expression for the determinant
  (\cite{CK:2013}, eq. (8.38)) which implies discreteness of the zeros when
  $C\neq 1$ (taking $b=1$). The proof in the general situation is slightly more
  complicated, but follows the same line making use of the asymptotics of the
  solutions resulting from their expression as analytic functions of the
  potentials.  We omit the details here: they can be found in \cite{KSH:10}.

By proposition \ref{uncount}, we may now assume that for some fixed $m$, the
determinant $d_m$ is identically zero. We will select some particular
values of $k$. Let $k_1^2$ be a Dirichlet eigenvalue for
(\ref{nonrad_SL_eigenvalue_problem}) for $p = p_{1m}$ on $[0,C].$
Since the space 
of solutions satisfying the boundary condition at $\eta =0$ is one 
dimensional, $X_m$ must be a Dirichlet eigenfunction. Since $X_m$ has
the same Cauchy data at $\eta =C$ as $Z_m$, then $Z_m$ must  also be
a Dirichlet eigenfunction and so $k_1^2$ is a Dirichlet eigenvalue for 
(\ref{nonrad_SL_eigenvalue_problem}) for $q = p_{2m}$ on $[0,C].$
Reversing the roles shows that potentials $p$ and $q$ have the same
Dirichlet spectrum. A similar argument shows that if $k^2$ is chosen
from the Neumann spectrum of $p$ ($X_m'(C)=0$ with the boundedness
condition at $\eta =0$) then $k^2$ must also belong to the Neumann
spectrum for $q$, and conversely. However, it is known
(\cite{Car:1997}, Theorem 1.3)
that equality of spectra for the Bessel type operator for two
independent boundary conditions implies equality of the
potentials. Thus $p_{1m} = p_{2m}$ on $[0,C].$ The proof is finished
by applying the following proposition.
\end{proof}

\begin{proposition} Let $p_{1m}, p_{2m}$ be the coefficients defined through
  (\ref{Ltrans1})-(\ref{Ltrans2}) on $[0,C]$. Also, assume that $[0,1]$ has the
  same length with respect to the two metrics defined by $n_c$ and $n_b$, that
  is assume $B = C$. If $p_{1m}=p_{2m}$ then $n_c = n_b.$ 
\end{proposition}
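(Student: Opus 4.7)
The plan is to explicitly invert the Liouville transformation: I will show that $p_m$ together with the endpoint $C$ and the boundary condition $n(1)=1$ (which holds automatically, since $\mathrm{supp}(1-c)\subset D$ forces $n\equiv 1$ on $\partial D$) determines $n$ uniquely. The central object is $\phi(\eta):=X_{m1}(\eta,0)$, the regular normalized solution of
$$\phi''(\eta)=\Bigl(\tfrac{m(m+1)}{\eta^2}+p_m(\eta)\Bigr)\phi(\eta),\qquad \lim_{\eta\to 0^+}\eta^{-(m+1)}\phi(\eta)=1.$$
Standard Frobenius analysis at the regular-singular endpoint $\eta=0$, combined with the boundedness of $p_m$ from Lemma~\ref{p_m_bounded}, shows that $\phi$ is uniquely determined by $p_m$, so that $p_{1m}=p_{2m}$ forces $\phi_c=\phi_b$ identically on $[0,C]$.

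Applying the Liouville transformation (\ref{Ltrans1}) to the constant radial solution $f\equiv 1$ of the $k=0$ radial equation identifies
$$\phi(\eta)=n(0)^{(2m+1)/4}\,r(\eta)^{m+1}\,n(r(\eta))^{1/4}.$$
Evaluating at $\eta=C$ and using $r(C)=1$ with $n(1)=1$, one reads off $\phi(C)=n(0)^{(2m+1)/4}$, which recovers $n(0)$ and hence the constant $c:=\phi(C)$. Substituting $n(r(\eta))^{1/4}=\phi(\eta)/\bigl(c\,r(\eta)^{m+1}\bigr)$ into the geometric identity $r'(\eta)=1/\sqrt{n(r(\eta))}$ yields the closed scalar ODE
$$r'(\eta)=c^{2}\,\frac{r(\eta)^{2m+2}}{\phi(\eta)^{2}},\qquad r(C)=1,$$
which separates and integrates from $\eta$ to $C$ to give
$$r(\eta)=\Bigl[1+(2m+1)\,c^{2}\int_\eta^{C}\frac{d\tau}{\phi(\tau)^{2}}\Bigr]^{-1/(2m+1)}.$$
The non-integrability of $\phi^{-2}$ near $0$ (since $\phi\sim\eta^{m+1}$) forces $r(0)=0$ automatically, so the formula produces the correct $r(\eta)$ on $[0,C]$. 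With $r$ and $c$ in hand, $n(r(\eta))=\bigl(\phi(\eta)/(c\,r(\eta)^{m+1})\bigr)^{4}$, and inverting $r$ recovers $n$ as a function of $r$ on $[0,1]$.

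Every step of this reconstruction depends only on the data $(p_m,C)$, so applying it to both profiles yields $n_c=n_b$. The delicate point is the first step: one must verify that the $\eta^{m+1}$-normalized solution of the Bessel-type equation really is singled out by the behaviour at $\eta=0$, so that the equality $\phi_c=\phi_b$ truly follows from $p_{1m}=p_{2m}$. Once this is secured, the remaining inversion by separation of variables is entirely explicit.
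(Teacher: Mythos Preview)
Your argument is correct and follows the same overall strategy as the paper---recovering $n$ from a distinguished solution of the $k=0$ Bessel-type equation---but with a genuinely different choice of auxiliary solution and normalization point. The paper instead sets $u_i(\eta)=n_i^{1/4}(r_i(\eta))/r_i(\eta)^m$, which is the Liouville transform of the \emph{singular} radial solution $f=r^{1-\gamma}$ at $k=0$, and observes that $u_1$ and $u_2$ satisfy the same equation with the same Cauchy data at the \emph{regular} endpoint $\eta=C$ (using $n_i(1)=1$ and $n_i'(1)=0$); ordinary ODE uniqueness then gives $u_1=u_2$, and squaring the reciprocal yields $(r_1^{2m+1}-r_2^{2m+1})'=0$ directly. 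Your choice of the regular solution $\phi=X_{m1}(\cdot,0)$ normalized at the singular endpoint $\eta=0$ reaches the same conclusion via the explicit separable ODE for $r(\eta)$, but at the cost of invoking the Frobenius-type uniqueness of the regular solution at a singular point---a fact the paper cites elsewhere but does not need in this proof. The paper's route is therefore slightly more elementary; yours has the minor compensations of giving an explicit integral formula for $r(\eta)$ and of not using the condition $n_i'(1)=0$.
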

\begin{proof}
  The proof is a minor modification of the proof of Theorem 4.3 in
  \cite{Som:95}, which treated the case $m=1.$ Since \cite{Som:95} is
  not easily accessible (some of its results were summarized in
  \cite{MSS:1997}), we present it here. Denote by $r_1(\eta)$ and
  $r_2(\xi)$ the inverses of $\eta(r)$ and $\xi(r)$ respectively, and
  replace $n_c,n_b$ by $n_1,n_2$ respectively. The equality of
  $p_{1m}$ and $p_{2m}$ is to be interpreted as equality when the
  first is evaluated at $r=r_1(\eta)$ and the second at
  $r=r_2(\eta)$. Here both $r_1$ and $r_2$ may be evaluated at $\eta$
  since $B = C$ implies $r_1$ and $r_2$ are defined over the same
  domain. Now, for $i=1,2$ define $u_i(\eta)$ by
  \[ u_i(\eta) = \frac{n_i^{1/4}(r_i(\eta))}{r_i(\eta)^m}.\] 
  Then a calculation shows that $u_i$ satisfies
\begin{align*}
\ddot{u_i}(\eta) - \left(p_{im}(\eta) + \frac{m(m+1)}{\eta^2}\right) u_i &=
0, \quad 0 < \eta \leq C,\\
u_i(C) = C^{-m}& \qquad 
\dot{u_i}(C) = -m C^{-(m+1)},
\end{align*}
where we have used that $n_i(1) =1$ and $n_i'(1)=0.$ Applying the
uniqueness theorem for ordinary differential equations, it holds that
$u_1(\eta) = u_2(\eta)$ on $(0,C].$ Taking reciprocals and squaring
  gives that $(r_1^{2m+1} -r_2^{2m+1})'=0$ using that $r_i'
  =\frac{1}{\sqrt{n_i}}.$ Since $r_1(C)=1=r_2(C)$, the difference is
    zero on the interval. Taking $(2m+1)^{th}$  roots gives $r_1 = r_2$
    and differentiation finishes the proof. 
\end{proof}

\begin{remark}
Theorem 2.1 of  \cite{CMS:2010} has a similar statement to
Theorem \ref{main_rad}, but an examination of the proof shows that the
hypothesis is really that $d_m$ is identically zero for all $m$.
\end{remark}

The following theorem and corollary summarizes the implications of the results
of this section for the TAT problem.

\begin{theorem}\label{TATresult_summary1}
  Let $c(r)$ and $b(r)$ be non-trapping, radially symmetric, acoustic profiles
  in the unit ball, $B_1 \subset \mathbb{R}^d$ with $d \ge 3$ odd. If $c(r) \ne
  b(r)$ then the intersection of the range of the thermoacoustic
  operators, $\mathcal{L}_{c(r)}$ and $\mathcal{L}_{b(r)}$ reduces to zero. 
\end{theorem}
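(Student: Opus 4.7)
The plan is to chain three results already established in the paper: Theorem \ref{main_rad}, Proposition \ref{uncount}, and Theorem \ref{clust_pt_spect_implies_uniq}. Essentially all the hard work has been done, so this will be a short assembly argument.

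First I would take the contrapositive of Theorem \ref{main_rad}. Since $c(r) \ne b(r)$ implies $n_c \ne n_b$, the transmission spectrum for the pair $(n_c,n_b)$ cannot be uncountable. By Proposition \ref{uncount}, this is equivalent to the statement that for every integer $m = j + \tfrac{1}{2}(d-3)$ with $j=0,1,2,\ldots$, the Wronskian $d_m(k)$ is not identically zero on $\Rplus$. Since $d_m$ is the restriction to $\Rplus$ of an entire function of exponential type (as noted just before Proposition \ref{uncount}), its zero set in $\Rplus$ is discrete. The real transmission spectrum is then the union over countably many values of $m$ of such discrete zero sets, and is therefore countable.

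Next I would observe that the complement in $\Rplus$ of a countable set is uncountable, and by Bolzano--Weierstrass any bounded uncountable subset of $\R$ admits a cluster point. Consequently the complement of the real transmission spectrum has a finite cluster point in $\Rplus$.

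Finally, because both $c$ and $b$ are non-trapping by hypothesis, Theorem \ref{clust_pt_spect_implies_uniq} applies and yields ${\mathcal Rg}(\mathcal{L}_{c(r)}) \cap {\mathcal Rg}(\mathcal{L}_{b(r)}) = \{0\}$. There is no substantive new obstacle in this proof; the genuine difficulty has already been absorbed into Theorem \ref{main_rad}, and in particular into the inverse-spectral uniqueness theorem of \cite{Car:1997} used there.
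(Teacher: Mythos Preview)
Your proposal is correct and is exactly the assembly argument the paper has in mind; the theorem is presented there without its own proof, merely as a summary of the section's results, and your chaining of Theorem \ref{main_rad} with Theorem \ref{clust_pt_spect_implies_uniq} is the intended route. One minor redundancy: once Theorem \ref{main_rad} gives that the spectrum is not uncountable, it is already at most countable, so the detour through Proposition \ref{uncount} and the zero sets of the $d_m$ is not strictly needed---you can pass directly to the observation that the complement of a countable set in $\Rplus$ has a finite cluster point.
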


\begin{corollary}\label{TATresult_summary2}
  Suppose $d \ge 3$ is odd and that thermoacoustic data $h(x,t)$ on $\partial
  B_1 \times \Rplus$ is generated by a radially symmetric, non-trapping acoustic
  profile. Then the acoustic profile generating data $h(x,t)$ is uniquely
  determined among the set of radially symmetric, non-trapping acoustic
  profiles.
\end{corollary}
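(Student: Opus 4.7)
The plan is to derive the corollary as an immediate reformulation of Theorem \ref{TATresult_summary1}, exploiting the fact that a range separation statement is equivalent to uniqueness once injectivity has been observed. Suppose that two radially symmetric, non-trapping acoustic profiles $c(r)$ and $b(r)$ on $B_1$ both generate the same boundary data $h(x,t)$. By the definition of the thermoacoustic map, there exist initial pressure disturbances $f_1, f_2 \in C_0^{\infty}(B_1)$ with $\mathcal{L}_{c(r)} f_1 = h = \mathcal{L}_{b(r)} f_2$, and in particular $h$ lies in the intersection ${\mathcal Rg}(\mathcal{L}_{c(r)}) \cap {\mathcal Rg}(\mathcal{L}_{b(r)})$.

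If we were to have $c \ne b$, Theorem \ref{TATresult_summary1} would force this intersection to reduce to $\{0\}$, whence $h = 0$. The injectivity of $\mathcal{L}_{c(r)}$ under the non-trapping hypothesis, noted immediately after Theorem \ref{vainberg_decay} citing \cite{FR:2009, SU:2009}, then gives $f_1 = 0$, and similarly $f_2 = 0$, so the scenario yields only the trivial datum. Taking the contrapositive, if $h$ is non-trivial --- which is implicit in the phrase ``data generated by an acoustic profile'' --- then the profile producing it is uniquely determined within the class of radially symmetric, non-trapping profiles.

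There is no substantive obstacle at this step: the mathematical content --- countability of the interior transmission spectrum for a distinct pair of radial profiles, obtained through the Liouville-transform analysis and the Wronskian characterization, together with its translation into range separation via Theorem \ref{clust_pt_spect_implies_uniq} --- has already been packaged into Theorem \ref{TATresult_summary1}. The only nontrivial verification is that the hypotheses of that theorem are met by any two elements of the uniqueness class, which is immediate because radial symmetry and the non-trapping property are exactly the defining conditions of the class in which uniqueness is asserted.
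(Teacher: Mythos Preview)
Your proposal is correct and matches the paper's approach: the paper gives no explicit proof of this corollary, treating it as an immediate consequence of Theorem~\ref{TATresult_summary1} together with Definition~\ref{range_to_uniqueness}, which is precisely what you have spelled out. Your appeal to injectivity of $\mathcal{L}_{c(r)}$ is a slight elaboration beyond what is strictly needed---since the paper \emph{defines} unique determination as range separation (Definition~\ref{range_to_uniqueness}), the corollary is literally a restatement of the theorem---but it does no harm and clarifies why the terminology is sensible.
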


\section{Conclusion}

This work details a relation between the unique determination of the
acoustic profile of a body from thermoacoustic data and properties of
the spectrum of the interior transmission eigenvalue problem. The
difference of two acoustic profiles gives a constrast which does not
realistically satisfy the constant sign or coercivity hypotheses of
previous work on the transmission eigenvalue problem. Radial
(non-trapping) sound speeds give the simplest examples of such
profiles, and we have succeeded to show that the associated spectrum
of pairs within this class is discrete. It would be of great interest to
develop a general method to analyze the transmission spectrum for two
acoustic profiles without positivity assumptions or radial symmetry
assumptions.

Our analysis does not put any \emph{a
  priori} conditions on the allowable initial impulses $f(x)$ besides
$\textrm{supp}(f) \subset D.$ Since we study the uniqueness question
in terms of separation of ranges of operators $\mathcal{L}_{c(x)}$,
restriction of the domain might lead to new uniqueness results. In
particular, focussed initial impulses might offer the possibility of
both uniqueness and inversion.
\section{Appendix}

\subsection{Proof of boundedness of $p_m(\eta)$}\label{p_m_bounded_pf}

Here we prove lemma \ref{p_m_bounded}. First, notice that since $c(r)$
is an acoustic profile it must be smooth, bounded, even in $r$, and
bounded away from zero. Thus, $n(r)$ is smooth, bounded, even in $r$,
and bounded away from zero. Moreover all derivatives of $c(r)$ and
$n(r)$ must also be bounded. This implies that the first two terms in
$p_m(\eta)$ are bounded on $[0,C]$. It remains to show that the term
\begin{equation*}
  \frac{1}{r^2 n(r)} - \frac{1}{\eta^2(r)}
\end{equation*}
is bounded on $[0,C],$ for which it suffices to prove that it has a
finite limit at $0.$
Writing the difference
\begin{equation*}
\frac{1}{r^2 n(r)}- \frac{1}{\eta^2(r)}= \frac{1}{n(r)} \left(1+
\sqrt{n}\frac{r}{\eta}\right) \left( \frac{\eta
  -\sqrt{n}r}{r^2\eta}\right).
\end{equation*}
The second term on the right has limit equal to 2, while integration
by parts in the definition of $\eta$ gives that $\eta = \sqrt{n}r
-\int_0^r s \phi'(s) \, ds,$ where $\phi= \sqrt{n} = \frac{1}{c}.$
This simplifies the numerator of the last term on the right so that an
application of l'Hospital's rule conveniently shows that its limit is
$-2\frac{\phi''(0)}{\eta'(0)}.$ 

\section{Acknowledgements}
Some of the results in this paper first appeared in the PhD thesis of
K. Hickmann at Oregon State University \cite{KSH:10}. David Finch
gratefully acknowledges support from NSF grant DMS 100914.

\bibliographystyle{amsplain}
\bibliography{trans_tat}

\end{document}